\algnewcommand\algorithmicoptional{\textbf{Optional:}}
\algnewcommand\Optional{\item[\algorithmicoptional]}%
\algnewcommand\algorithmicinput{\textbf{Input:}}
\algnewcommand\Input{\item[\algorithmicinput]}%
\DeclareMathOperator{\CovHat}{\widehat{\Cov}}
\DeclareMathOperator{\Cov}{Cov}
\DeclareMathOperator{\Expectation}{\mathbb E} 
\DeclareMathOperator{\Hessian}{Hess}
\DeclareMathOperator{\Span}{Span}
\DeclareMathOperator{\Supp}{Supp}
\DeclareMathOperator{\Var}{Var}
\newcommand{\N}{\mathbb{N}}
\newcommand{\covat}[3]{\Cov_{#1}\left(#2,#3\right)}
\newcommand{\derivby}[1]{\frac{d}{d#1}}
\newcommand{\euler}{\mathrm{e}}
\newcommand{\expectat}[2]{{\Expectation}_{#1}\left[#2\right]}
\newcommand{\expof}[1]{\exp\left(#1\right)}
\newcommand{\logof}[1]{\log\left(#1\right)}
\newcommand{\naturals}{\N}
\newcommand{\normat}[2]{\left\Vert#2\right\Vert_{#1}}
\newcommand{\pdensities}{\mathcal P_>}
\newcommand{\reals}{\mathbb R}
\newcommand{\setof}[2]{\left\{#1 \colon #2 \right\}}
\newcommand{\set}[1]{\left\{#1\right\}}
\newcommand{\smodel}{\mathcal M}
\newcommand{\spanof}[1]{\Span\left(#1\right)}
\newcommand{\suppof}[1]{\Supp{#1}}
\newcommand{\upgamma}[2]{\Gamma\left(#1,#2\right)}
\newcommand{\varat}[2]{\Var_{#1}\left(#2\right)}
\begin{document}

\title{Optimization via Information Geometry}
\titlerunning{Optimization via Information Geometry}

\author{Luigi Malag\`o and Giovanni Pistone}
\authorrunning{L. Malag\`o and G. Pistone}

\institute{Luigi Malag\`o \at Dipartimento di Informatica, Universit\`a degli Studi di Milano, Via Comelico, 39/41, 20135 Milano, Italy, \email{malago@di.unimi.it}
\and Giovanni Pistone \at de Castro Statistics, Collegio Carlo
Alberto, Via Real Collegio 30, 10024 Moncalieri, Italy, \email{giovanni.pistone@carloalberto.org}}

\maketitle

\abstract{Information Geometry has been used to inspire efficient algorithms for stochastic optimization, both in the combinatorial and the continuous case. We give an overview of the authors' research program and some specific contributions to the underlying theory.}

\abstract*{Information Geometry has been used to inspire efficient
  algorithms for black-box optimization, both in the combinatorial and
  in the continuous case. We give an overview of the authors' research program and some specific contribution to the underlying theory.}

\section{Introduction}
\label{sec:introduction}
The present paper is based on the talk given by the second author on May 21, 2013, to the Seventh International Workshop on Simulation in Rimini. Some pieces of research that were announced in that talk have been subsequently published \cite{malago|matteucci|pistone:2013CEC,pistone:2013Entropy,pistone:2013GSI}. Here we give a general overview, references to latest published results, and a number of specific topics that have not been published elsewhere.

 Let $(\Omega,\mathcal F,\mu)$ be a measure space, whose strictly positive probability densities form the algebraically open convex set $\pdensities$. An \emph{open statistical model} $(\smodel,\theta,B)$ is a parametrized subset of $\pdensities$, that is, $\smodel \subset \pdensities$ and $\theta \colon \smodel \to B$, where $\theta$ is a one-to-one mapping onto an open subset of a Banach space $B$. We assume in the following that $\Omega$ is endowed with a distance and $\mathcal F$ is its Borel $\sigma$-algebra. 

If $f \colon \Omega \to \reals$ is a bounded continuous function, the mapping $\smodel \ni p \mapsto \expectat p f$ is a \emph{Stochastic Relaxation} (SR) of $f$. The strict inequality $\expectat p f < \sup_{\omega \in \Omega} f(\omega)$ holds for all $p \in \smodel$, unless $f$ is constant. However, $\sup_{p \in \smodel} \expectat p f = \sup_{\omega \in \Omega} f(\omega)$  if there exist a probability measure $\nu$ in the weak closure of $\smodel\cdot\mu$ whose support is contained in the set of maximizing points of $f$, that is to say
\begin{equation*}
  \nu\setof{\omega \in \Omega}{f(\omega) = \sup_{\omega\in\Omega}f(\omega)} = 1, \quad \text{or} \quad \int f \ d\nu = \sup_{\omega \in \Omega} f(\omega).
\end{equation*}
Such a $\nu$ belongs to the border of $\smodel\cdot\mu$. For a
discussion of the border issue for finite $\Omega$, see 
\cite{malago|pistone:arXiv1012.0637}. Other relaxation methods have
been considered, e.g., \cite{arnoldetal:2011arXiv,wierstra|Schaul|peters|schmidhuber:2008}.

A \emph{SR optimization method} is an algorithm producing a sequence $p_n \in \smodel$, $n \in
\naturals$, which is expected to converge to the probability measure
$\nu$, so that $\lim_{n \to \infty} \expectat {p_n} f =
\sup_{\omega \in \Omega} f(\omega)$.  Such algorithms are best studied in the framework of \emph{Information Geometry} (IG), that is, the differential geometry of statistical models. See \cite{amari|nagaoka:2000} for a general treatment of IG and   
\cite{arnoldetal:2011arXiv,bensadon:2013arXiv1309.7168,malago:2012thesis,malago|matteucci|pistone:2009NIPS,malago|matteucci|pistone:2011CEC,malago|matteucci|pistone:2011FOGA,malago|matteucci|pistone:2013CEC} for applications to SR. All the
quoted literature refers to the case where the model Banach space of
the statistical manifold, i.e., the parameter space, is finite
dimensional, $B = \reals^d$. An infinite dimensional version of IG has
been developed, see \cite{pistone:2013GSI} for a recent presentation
together with new results, and references therein for a detailed bibliography.
The nonparametric version is unavoidable in applications to evolution
equations in Physics \cite{pistone:2013Entropy}, and it is useful even
when the sample space is finite
\cite{malago|pistone:inprogressEntropy}.
%
\section{Stochastic relaxation on an exponential family}
\label{sec:exponential}%
We recall some basic facts on exponential families, see \cite{brown:86}.
\begin{enumerate}
\item The exponential family $q_\theta=\expof{\sum_{j=1}^d \theta_jT_j-\psi(\theta)}\cdot p$, $\expectat p {T_j} = 0$, is a statistical model $\smodel = \set{q_\theta}$ with parametrization $q_\theta \mapsto \theta \in \reals^d$.
  \item $\psi(\theta) = \logof{\expectat p {\euler^{\theta \cdot T}}}$, $\theta \in \reals^d$, is convex and lower semi-continuous.
  \item $\psi$ is analytic on the (non empty) interior $\mathcal U$ of its proper domain.
  \item $\nabla \psi(\theta) = \expectat \theta {T}$, $T = (T_1,\dots,T_d)$.
\item $\Hessian \psi(\theta) = \varat \theta T$.
  \item $\mathcal U \ni \theta \mapsto \nabla \psi(\theta) = \eta \in
    \mathcal N$ is one-to-one, analytic, and monotone; $\mathcal N$ is the interior of the \emph{marginal polytope}, i.e., the convex set generated by $\setof{T(\omega)}{ \omega \in \Omega}$.
\item The gradient of the SR of $f$ is
\begin{equation*}
\nabla(\theta \mapsto \expectat \theta f) = (\covat \theta f {T_1},\dots, \covat \theta f {T_d}),
\end{equation*}
which suggests to take the least squares approximation of $f$ on
$\spanof{T_1,\dots,T_d}$ as direction of \emph{steepest ascent}, see \cite{malago|matteucci|pistone:2011FOGA}.
\item The representation of the gradient in the scalar product with respect to  $\theta$ is called \emph{natural gradient}, see \cite{amari|nagaoka:2000,amari:1998natural,malago|pistone:inprogressEntropy}.
\end{enumerate}

Different methods can be employed to generate a maximizing sequence
of densities $p_n$ is a statistical model $\mathcal M$. A first
example is given by Estimation of Distribution
Algorithms (EDAs)~\cite{larranaga|lozano:2001}, a large family of iterative algorithms where the
parameters of a density are estimated after sampling and
selection, in order to favor samples with larger values for $f$, see
Example~\ref{eda}. Another approach is to evaluate the gradient of
$\expectat {p} f$ and follow the direction of the natural gradient 
over $\mathcal M$, as illustrated in Example~\ref{sngd}. 

\begin{example}[EDA from {\cite{malago|matteucci|pistone:2013CEC}}]
\label{eda}
An \emph{Estimation of Distribution Algorithm} is a SR optimization
algorithm based on sampling, selection and estimation, see \cite{larranaga|lozano:2001}.
\begin{quotation}
\begin{algorithmic}
\Input $N,M$ \Comment population size, selected population size
\Input $\mathcal M=\{p(x;\xi)\}$ \Comment parametric model
\State $t \gets 0$
\State $\mathcal P^t = \Call{InitRandom}$ \Comment random initial population
\Repeat
\State $\mathcal P^t_s = \Call{Selection}{\mathcal P^t,M}$ \Comment select $M$
samples
\State $\xi^{t+1} = \Call{Estimation}{\mathcal P^t_s,\mathcal M}$
\Comment opt.\ model selection
\State$\mathcal P^{t+1} = \Call{Sampler}{\xi^{t+1},N}$ \Comment $N$ samples
 \State $t \gets t+1$
\Until{$\Call{StoppingCriteria}$}
\end{algorithmic}
\end{quotation}
\end{example}

\begin{example}[SNGD from {\cite{malago|matteucci|pistone:2013CEC}}]
\label{sngd}
\emph{Stochastic Natural Gradient Descent}~\cite{malago|matteucci|pistone:2011FOGA} is a SR algorithm that
  requires the estimation of the gradient.
\begin{quotation}
\begin{algorithmic}
\Input $N, \lambda$ \Comment population size, learning rate
\Optional $M$ \Comment selected population size (default $M=N$)
\State $t \gets 0$
\State $\theta^t \gets (0, \dots, 0)$ \Comment uniform distribution
\State $\mathcal P^t \gets \Call{InitRandom}$ \Comment random initial population
\Repeat
\State $\mathcal P^t_s = \Call{Selection}{\mathcal P^t,M}$ \Comment opt.\ select $M$
samples
\State $\widehat \nabla \mathbb E [f] \gets \CovHat(f,T_i)_{i=1}^d$ \Comment empirical covariances
\State $\widehat I \gets [\CovHat(T_i,T_j)]_{i,j=1}^d$
\Comment $\{T_i(x)\}$ may be learned 
\State $\theta^{t+1} \gets \theta^t - \lambda \widehat I^{-1} \widehat
\nabla \mathbb E [f]$
\State$\mathcal P^{t+1} \gets \Call{GibbsSampler}{\theta^{t+1},N}$ \Comment $N$ samples
 \State $t \gets t+1$
\Until{$\Call{StoppingCriteria}$}
\end{algorithmic}
\end{quotation}
\end{example}

Finally, other algorithms are based on Bregman
divergence. Example~\ref{binomial} illustrates the connection with the
exponential family.

\begin{example}[Binomial B$(n,p)$]\label{ex:bin1}
\label{binomial}
On the finite sample space $\Omega = \set{0,\dots,n}$ with $\mu(x) = \binom n x$, consider the exponential family $p(x;\theta) = \expof{\theta x - n \logof{1+\euler^\theta}}$. With respect to the expectation parameter $\eta = n\euler^\theta/(1+\euler^\theta) \in ]0,n[$ we have $p(x;\eta)= (\eta/ n)^x (1 - \eta / n)^{n-x}$, which is the standard presentation of the binomial density.

The standard presentation is defined for $\eta = 0,n$, where the exponential formula is not. In fact, the conjugate $\psi_*(\eta)$ of $\psi(\theta) = n \logof{1+\euler^\theta}$ is
\begin{equation*}
  \psi_*(\eta) =
  \begin{cases}
    +\infty & \text{if $\eta < 0$ or $\eta > n$,} \\
    0 & \text{if $\eta = 0, n$,} \\
     \eta \logof{\frac{\eta}{n-\eta}} - n \logof{\frac{n}{n-\eta}} & \text{if $ 0 < \eta < n$.} 
  \end{cases}
\end{equation*}

We have
\begin{align*}
 \log p(x;\eta) &= \logof{\frac{\eta}{n-\eta}} (x - \eta) + \psi_*(\eta), \quad \eta \in ]0,n[ \\
&= \psi_*'(\eta)(x-\eta) + \psi_*(\eta) \le \psi_*(x).
\end{align*}
For $x \ne 0,n$, the sign of  $\psi_*'(\eta)(x-\eta)$ is eventually negative as $\eta \to 0,n$, hence
\begin{equation*}
  \lim_{\eta \to 0,n} \log p(x;\eta)  = \lim_{\eta \to 0,n} \psi_*'(\eta)(x-\eta) + \psi_*(\eta) = -\infty.
\end{equation*}
If $x = 0,n$, the sign of both $\psi_*'(\eta)(0-\eta)$ and
$\psi_*'(\eta)(n-\eta)$ is eventually positive as $\eta \to 0$ and
$\eta \to n$, respectively. The limit is bounded by $0 = \psi_*(x)$,
for $x = 0,n$.

The argument above is actually general. It has been observed by \cite{benerjee|merugu|dhillon|ghosh:2005} that the Bregman divergence $D_{\psi_*}(x \Vert \eta) = \psi_*(x) -  \psi_*(\eta) -  \psi_*'(\eta)(x-\eta) \ge 0$ provides an interesting form of the density as $ p(x;\eta) = \euler^{- D_{\psi_*}(x \Vert \eta)} \euler^{\psi_*(x)} \propto \euler^{- D_{\psi_*}(x \Vert \eta)}$.
%

\end{example}

\section{Exponential manifold}
\label{sec:exponential-manifold}
The set of positive probability densities $\pdensities$ is a convex
subset of $L^1(\mu)$. Given a $p \in \pdensities$, every $q \in
\pdensities$ can be written as $q = \euler^v \cdot p$ where $v =
\logof{\frac qp}$. Below we summarize, together with a few new
details, results from
\cite{pistone:2013Entropy,pistone:2013GSI} and references therein, and the unpublished \cite{SST:inprogress}.

\begin{definition}[Orlicz $\Phi$-space {\cite{krasnoselskii|rutickii:61}, \cite[Chapter II]{musielak:1983}, \cite{rao|ren:2002}}]
Define $\phi(y) = \cosh y - 1$. The Orlicz $\Phi$-space $L^\Phi(p)$ is the vector space of all random variables such that $\expectat p {\Phi(\alpha u)}$ is finite for some $\alpha > 0$. Equivalently, it is the set of all random variables $u$ whose Laplace transform under $p \cdot \mu$, $t \mapsto \hat u_p(t) = \expectat p {\euler^{t u}}$ is finite in a neighborhood of 0. We denote by $M^\Phi(p) \subset L^\Phi(p)$ the vector space of random variables whose Laplace transform is always finite.
\end{definition}

\begin{proposition}[Properties of the $\Phi$-space]\ 
  \begin{enumerate}
  \item The set $S_{\le1} = \setof{u \in L^\Phi(p)}{\expectat p {\Phi(u)} \le 1}$ is the closed unit ball of the complete norm
    \begin{equation*}
      \normat p u = \inf\setof{\rho > 0}{\expectat p {\Phi\left(\frac u\rho \right)} \le 1}
    \end{equation*}
on the $\Phi$-space. For all $a \ge 1$ the continuous injections $L^\infty(\mu) \hookrightarrow L^\Phi(p) \hookrightarrow L^a(p)$ hold.
  \item $\normat p u = 1$ if either $\expectat p {\Phi(u)} = 1$ or $\expectat p {\Phi(u)} < 1$ and $\expectat p {\Phi\left(\frac u\rho\right)} = \infty$ for $\rho > 1$. If $\normat p u > 1$ then $\normat p u \le \expectat p {\Phi(u)}$. In particular, $\lim_{\normat p u \to \infty} \expectat p {\Phi\left(u\right)} = \infty$.
  \item $M^\Phi(p)$ is a closed and separable subspace of $L^{\Phi}(p)$.
\item $L^\Phi(p) = L^\Phi(q)$ as Banach spaces if, and only if, $\int p^{1-\theta} q^\theta\ d\mu$ is finite on a neighborhood of $[0,1]$.
  \end{enumerate}
  \end{proposition}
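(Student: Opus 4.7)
The plan is to treat the four items in order, leaning on standard Orlicz-space techniques adapted to $\Phi(y) = \cosh y - 1$: an even, smooth, strictly convex Young function with $\Phi(0)=0$, superlinear growth, and the pointwise domination $\Phi(y) \ge y^{2k}/(2k)!$ for every $k \ge 1$.

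For item (1), I would first verify that $\normat{p}{\cdot}$ is a norm. Positive homogeneity is immediate from the defining infimum; the triangle inequality follows from the usual convexity trick applied to $\Phi((u_1+u_2)/(\rho_1+\rho_2))$ with $\rho_i > \normat{p}{u_i}$. That $S_{\le 1}$ is the closed unit ball uses monotone convergence as $\rho \downarrow \normat{p}{u}$. Completeness reduces to convergence of absolutely norm-summable series: the inclusion $L^\Phi(p) \hookrightarrow L^1(p)$ established next lets me extract an a.s.\ limit, and Fatou applied to the modulars upgrades this to norm convergence. The injection $L^\infty(\mu) \hookrightarrow L^\Phi(p)$ is immediate; for $L^\Phi(p) \hookrightarrow L^a(p)$ the pointwise bound $|y|^a \le C_a \Phi(y) + D_a$ (from the polynomial domination) applied to $u/\normat{p}{u}$ controls $\expectat{p}{|u|^a}^{1/a}$ by a multiple of $\normat{p}{u}$.

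Items (2) and (3) are structural consequences of convexity. Since $\Phi(0)=0$ and $\Phi$ is convex, $\Phi(ry) \ge r \Phi(y)$ for $r \ge 1$; taking $r = \normat{p}{u} > 1$ gives $\expectat{p}{\Phi(u)} \ge \normat{p}{u}\,\expectat{p}{\Phi(u/\normat{p}{u})}$, and by (1) the right modular equals $1$ when finite, yielding $\normat{p}{u} \le \expectat{p}{\Phi(u)}$; the dichotomy at $\normat{p}{u}=1$ is a direct case analysis of whether the defining infimum is attained, and the divergence of the modular as the norm blows up is then immediate. For closedness of $M^\Phi(p)$, given $u_n \to u$ in norm with $u_n \in M^\Phi(p)$ and arbitrary $\alpha>0$, pick $n$ with $\normat{p}{u-u_n} < 1/(2\alpha)$ and split $\alpha u = \tfrac12(2\alpha u_n) + \tfrac12(2\alpha(u-u_n))$; convexity of $\Phi$ dominates $\Phi(\alpha u)$ by a sum of integrable terms. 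Separability then follows from the norm-density of bounded simple functions supported on sets of finite measure, via truncation and the continuous inclusion $L^\infty(\mu) \hookrightarrow L^\Phi(p)$.

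The main obstacle is item (4), where I would focus most of the effort. Writing $v = \log(q/p)$, observe $\int p^{1-\theta} q^\theta\, d\mu = \expectat{p}{\euler^{\theta v}}$, and by convexity of the cumulant generating function finiteness on a neighborhood of $[0,1]$ is equivalent to finiteness at some $\theta_0 < 0$ and some $\theta_1 > 1$. For sufficiency, given $u$ with $\expectat{p}{\Phi(\alpha u)} < \infty$, H\"older's inequality with conjugate exponents $(r,r')$ yields
\begin{equation*}
\expectat{q}{\Phi(\beta u)} = \expectat{p}{\Phi(\beta u)\, (q/p)} \le \expectat{p}{\Phi(\beta u)^r}^{1/r}\,\expectat{p}{(q/p)^{r'}}^{1/r'}.
\end{equation*}
The structural estimate $\Phi(y)^r \le 4\,\Phi(ry) + C_r$ (verifiable from $\cosh y \le \euler^{|y|}$ and $\Phi(ry) \ge \euler^{r|y|}/4$ for $|y|$ large) then controls the first factor by $\expectat{p}{\Phi(r\beta u)}$, which is finite as soon as $r\beta \le \alpha$; choosing $r' \in (1, \theta_1)$ so that $\expectat{p}{(q/p)^{r'}} < \infty$ and setting $\beta = \alpha/r$ yields $u \in L^\Phi(q)$. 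The reverse inclusion is symmetric using $\theta_0$, and norm equivalence then follows from the open mapping theorem. Necessity runs the implication backwards: if the Banach spaces coincide, applying the inherited norm equivalence to suitable rescalings of $\pm v$ forces moments of $\euler^{\theta v}$ strictly beyond the endpoints $0$ and $1$. The delicate step I expect to be the hardest is the bookkeeping of the H\"older exponents, so that the exponential blow-up inside $\Phi$, the slack around $\theta \in \{0,1\}$, and the scaling $\beta = \alpha/r$ are simultaneously compatible.
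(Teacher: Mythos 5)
The paper proves only item (2) in house; items (1) and (3) are dispatched by citation to Krasnosel'skii--Rutickii, Musielak and Rao--Ren, and item (4) by citation to Cena--Pistone and the Santacroce--Siri--Trivellato preprint. Your sketches for (1) and (3) are the standard textbook arguments being cited, so no conflict there. On item (2), where a comparison is actually possible, your route is the mirror image of the paper's: the paper picks $1<a<\normat pu$, notes $\normat p{\tfrac a{\normat pu}u}=a>1$ so the closed-unit-ball property of (1) forces $\expectat p{\Phi\bigl(\tfrac a{\normat pu}u\bigr)}>1$, and then applies $\Phi(\lambda y)\le\lambda\Phi(y)$ with $\lambda=a/\normat pu<1$ to get $\normat pu<a\,\expectat p{\Phi(u)}$ for every $a>1$. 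You instead scale up and invoke ``$\expectat p{\Phi(u/\normat pu)}=1$ when finite, by (1)''. That does not follow from (1): the closed-ball property only gives $\le 1$ at the normalization, and equality genuinely fails in the second alternative listed in item (2). It is true in the case you need, because $\expectat p{\Phi(u)}<\infty$ together with $\normat pu>1$ places $t=1$ strictly inside the effective domain of the convex map $t\mapsto\expectat p{\Phi(tu)}$, which is then continuous at $t=1/\normat pu$ and its value there is squeezed between the two one-sided bounds to equal $1$ --- but you must say this, or simply adopt the paper's down-scaling, which avoids the issue entirely.

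For item (4), your sufficiency argument (H\"older against $q/p$, the structural bound $\Phi(y)^r\le 2\Phi(ry)+2$, exponent $r'$ chosen inside the finiteness interval beyond $1$, and the symmetric step with $\theta_0<0$) is essentially the published proof, and the bookkeeping does close. The necessity direction, however, has a genuine gap: you propose to apply the norm equivalence to ``suitable rescalings of $\pm v$'' with $v=\log(q/p)$, but $v$ is not known a priori to lie in $L^\Phi(p)$ or $L^\Phi(q)$. In fact $v\in L^\Phi(p)\cap L^\Phi(q)$ is \emph{equivalent} to the conclusion you are after, since $\expectat p{\euler^{\theta v}}=\int p^{1-\theta}q^\theta\,d\mu$ and $\expectat q{\euler^{tv}}=\int p^{-t}q^{1+t}\,d\mu$, so finiteness near $0$ and near $1$ is exactly membership of $v$ in the two spaces; using $v$ as a test function assumes what is to be proved. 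The argument in the cited references works with bounded truncations $v_n$ of $v$, which lie in both spaces for free, and uses the uniform constant supplied by the closed graph theorem to obtain a bound independent of $n$ before passing to the limit by monotone convergence. Without that truncation device the necessity half does not close.
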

  \begin{proof}\ 
    \begin{enumerate}
    \item See \cite{krasnoselskii|rutickii:61}, \cite[Chapter II]{musielak:1983}, \cite{rao|ren:2002}.
    \item The function $\reals_\ge \ni \alpha \mapsto \hat u(t) = \expectat p {\Phi(\alpha u)}$ is increasing, convex, lower semi-continuous. If for some $t_+ > 1$ the value $\hat u(t_+)$ is finite, we are in the first case and $\hat u(1) = 1$. Otherwise, we have $\hat u(1) \le 1$. If $\normat p u > a > 1$, so that $\normat p {\frac a{\normat p u} u} > 1$, hence
      \begin{equation*}
        1 < \expectat p {\Phi\left(\frac a{\normat p u} u\right)} \le \frac a{\normat p u} \expectat p {\Phi\left(u\right)},
      \end{equation*}
and $\normat p u < a \expectat p {\Phi\left(u\right)}$, for all $a > 1$.
\item See \cite{krasnoselskii|rutickii:61}, \cite[Chapter II]{musielak:1983}, \cite{rao|ren:2002}.
\item See \cite{cena|pistone:2007,SST:inprogress}.   
\end{enumerate}
  \end{proof}

\begin{example}[Boolean state space]
In the case of a finite state space, the moment generating function is
finite everywhere, but its computation can be challenging. We discuss
in particular the Boolean case $\Omega=\set{+1,-1}^n$ with counting
reference measure $\mu$ and uniform density $p(x)=2^{-n}$, $x\in
\Omega$. In this case there is a huge literature from statistical
physics, e.g., \cite[Ch. VII]{gallavotti:1999short}. A generic real
function on $\Omega$---called 
pseudo-Boolean \cite{boros|hammer:2002} in the combinatorial optimization literature---has the form
$u(x)=\sum_{\alpha \in L} \hat u(\alpha) x^\alpha$, with $L =
\set{0,1}^n$, $x^\alpha = \prod_{i=1}^n x_i^{\alpha_i}$, $\hat
u(\alpha) = 2^{-n} \sum_{x \in \Omega} u(x) x^\alpha$.  

As $\euler^{a x} = \cosh(a) + \sinh(a) x$ if $x^2 = 1$ i.e., $x = \pm 1$, we have
 \begin{align*}
  \euler^{tu(x)} &= \expof{\sum_{\alpha \in \suppof{\hat{u}}} t\hat{u}(\alpha) x^\alpha} = \prod_{\alpha \in \suppof{\hat{u}}} \euler^{t\hat{u}(\alpha) x^\alpha} \\
&= \prod_{\alpha \in \suppof{\hat{u}}} \left(\cosh(t\hat{u}(\alpha)) + \sinh(t\hat{u}(\alpha)) x^\alpha \right) \\&= \sum_{B \subset \suppof{\hat u}}\prod_{\alpha \in B^c} \cosh(t\hat{u}(\alpha)) \prod_{\alpha \in B} \sinh(t\hat{u}(\alpha)) x^{\sum_{\alpha \in B}\alpha}.
\end{align*}

The moment generating function of $u$ under the uniform density $p$ is
\begin{equation*}
  t \mapsto \sum_{B \in \mathcal B(\hat u)} \prod_{\alpha \in B^c} \cosh(t\hat u(\alpha)) \prod_{\alpha \in B} \sinh(t\hat u(\alpha)),
\end{equation*}
where $\mathcal B(\hat u)$ are those $B \subset \suppof{\hat u}$ such that $\sum_{\alpha \in B} \alpha = 0 \mod 2$. We have
\begin{equation*}
  \expectat p \Phi(tu) =  \sum_{B \in \mathcal B_0(\hat u)} \prod_{\alpha \in B^c} \cosh(t\hat u(\alpha)) \prod_{\alpha \in B} \sinh(t\hat u(\alpha)) - 1,
\end{equation*}
where $\mathcal B_0(\hat u)$ are those $B \subset \suppof{\hat u}$ such that $\sum_{\alpha \in B} \alpha = 0 \mod 2$ and $\sum_{\alpha \in \suppof{\hat u}} \alpha = 0$. 

If $S$ is the $\set{1,\dots,n}\times\suppof{\hat u}$ matrix with elements $\alpha_i$ we want to solve the system $Sb = 0 \mod 2$ to find all elements of $\mathcal B$; we add the equation $\sum b = 0 \mod 2$ to find $\mathcal B_0$. The simplest example is $u(x) = \sum_{i=1}^n c_i x_i$,
\end{example}

\begin{example}[The sphere is not smooth in general]
We look for the moment generating function of the density
\begin{equation*}
  p(x) \propto (a+x)^{-\frac32} \euler^{-x}, \quad x >0,
\end{equation*}
where $a$ is a positive constant. From the incomplete gamma integral
\begin{equation*}
\upgamma{-\frac12}x = \int_x^\infty s^{-\frac12-1} \euler^{-s}\ ds, \quad x > 0,
\end{equation*}
we have for $\theta, a > 0$,
\begin{equation*}
\derivby x \upgamma{-\frac12}{\theta(a+x)} = -\theta^{-\frac12}\euler^{-\theta a}(a+x)^{-\frac32} \euler^{-\theta x}.
\end{equation*}

We have, for $\theta \in \reals$,
\begin{equation*}
 C(\theta,a) = \int_0^\infty (a+x)^{-\frac32} \euler^{-\theta x}\ dx =
 \begin{cases}
   \sqrt\theta \euler^{\theta a} \upgamma{-\frac12}{\theta a} & \text{if $\theta > 0$}. \\
\frac1{2\sqrt a} & \text{if $\theta = 0$}, \\ +\infty & \text{if $\theta < 0$}. 
 \end{cases}
\end{equation*}
or, $C(\theta,a) = \frac12 a^{-\frac12} - \frac{\sqrt \pi \theta}2 \euler^{\theta a} R_{1/2,1}(\theta a)$ if $\theta \le 1$, $+\infty$ otherwise, where $R_{1/2,1}$ is the survival function of the Gamma distribution with shape $1/2$ and scale 1.

The density $p$ is obtained with $\theta=1$,
\begin{equation*}
  p(x) = C(1,a)^{-1} (a+x)^{-\frac32} \euler^{-x} = \frac{(a+x)^{-\frac32} \euler^{-x}}{\euler^{a} \upgamma{-\frac12}{a}}, \quad x > o,
\end{equation*}
and, for the random variable $u(x) = x$, the function 
\begin{align*}
 \alpha \mapsto \expectat p {\Phi(\alpha u)} &= \frac1{\euler^{a} \upgamma{-\frac12}{a}}\int_0^\infty  (a+x)^{-\frac32}\frac{\euler^{-(1-\alpha)x}+\euler^{-(1+\alpha)x}}2\ dx - 1 \notag \\
&= \frac{C(1-\alpha,a)+C(1+\alpha,a)}{2C(1,a)} - 1 \label{eq:nonsteep}
\end{align*}
is convex lower semi-continuous on $\alpha \in \reals$, finite for $\alpha \in [-1,1]$, infinite otherwise, hence not steep. Its value at $\alpha = 1$ is
\begin{align*}
  \expectat p {\Phi(u)} &= \frac1{\euler^{a} \upgamma{-\frac12}{a}}\int_0^\infty  (a+x)^{-\frac32}\frac{1 + \euler^{-2x}}2 \ dx - 1 \\
&= \frac{C(0,a)+C(2,a)}{2C(1,a)} - 1
\end{align*}
\end{example}

\begin{example}[Normal density] Let $p(x) = (2\pi)^{-1/2} \euler^{-(1/2)x^2}$. Consider a generic quadratic polynomial $u(x) = a + bx + \frac 12 cx^2$. We have for $tc \ne 1$
\begin{equation*}
  t(a + bx + \frac12 cx^2) - \frac12 x^2 == - \frac1{2(1-tc)^{-1}} \left(x - \frac{tb}{1-tc}\right)^2 +\frac12\frac{t^2b^2-2ta(1-tc)}{(1-tc)},
\end{equation*}
hence
\begin{equation*}
  \expectat p {\euler^{tu}} =
  \begin{cases}
    +\infty & \text{if $tc \le 1$,} \\ \sqrt{1-tc} \expof{\dfrac12\dfrac{t^2b^2-2ta(1-tc)}{(1-tc)}} & \text{if $tc < 1$}.
  \end{cases}
\end{equation*}

If, and only if, $-1 < c < 1$, we have
\begin{multline*}
  \expectat p {\Phi(u)} = \\ \frac12 \sqrt{1-c} \expof{\dfrac12\dfrac{b^2-a(1-c)}{(1-c)}} + \frac12 \sqrt{1+c} \expof{\dfrac12\dfrac{b^2-a(1+c)}{(1+c)}} - 1.
\end{multline*}
\end{example}

\section{Vector bundles}
\label{sec:vb}

Vector bundles are constructed as sets of couples $(p,v)$ with $p \in \pdensities$ and $v$ is some space of random variables such that $\expectat p v = 0$. The tangent bundle is obtained when the vector space is $L^{\Phi}_0(p)$. The Hilbert bundle is defined as $H\pdensities = \setof{(p,v)}{p \in \pdensities, v \in L_0^2(p)}$. We refer to \cite{pistone:2013Entropy} and \cite{malago|pistone:inprogressEntropy} were charts and affine connections on the Hilbert bundle are derived from the isometric transport
\begin{equation*}
  L_0^2(p) \ni u \mapsto \sqrt{\frac pq} u  -  \left(1 +  \expectat q {\sqrt{\frac pq}}\right)^{-1} \left(1 + \sqrt{\frac pq}\right) \expectat q {\sqrt{\frac pq}u} \in L_0^2(q).
\end{equation*}
In turn, an isometric trasport $U_p^q \colon L_0^2(p) \to L_0^2(q)$ can be used to compute the derivative of a vector field in the Hilbert bundle, for example the derivative of the gradient of a relaxed function.
  
The resulting second order structure is instrumental in computing the
Hessian of the natural gradient of the SR function. This allows the
design a second order approximation method, as it is suggested in
\cite{absil|mahony|sepulchre:2008} for general Riemannian manifolds,
and applied to SR in \cite{malago|pistone:inprogressEntropy}. A second order structure is also used to define the curvature of a
statistical manifold and, possibly, to compute its geodesics, see
\cite{bensadon:2013arXiv1309.7168}  for applications to optimization.

\bibliographystyle{spmpsci}

\end{document}